\newtheorem{theorem}{Theorem}[section]
\pgfplotsset{compat=1.16}
\appto{\bibsetup}{\sloppy}
\newcommand\N{\mathbb{N}}
\newcommand\R{\mathbb{R}}
\newcommand\E{\mathds{E}}
\newcommand\p{\mathds{P}}
\newcommand\1{\mathds{1}}
\newcommand\eqd{\overset{d}{=}}
\newcommand\Var{\mathrm{Var}}
\newcommand\co{\mathsf{c}}
\newcommand\nf[1]{\normalfont{#1}}
\newcommand{\D}{\mathrm{d}}
\newcommand{\ov}[1]{\overline{#1}}
\title{A Gaussian approximation
theorem for L\'evy processes}
\date{\today}
\author{David Bang, Jorge Gonz\'alez C\'azares \& Aleksandar Mijatovi\'c}
\address{Department of Statistics, University of Warwick, \and The Alan Turing Institute, UK}
\email{david.bang@warwick.ac.uk}
\email{jorge.gonzalez-cazares@warwick.ac.uk}
\email{a.mijatovic@warwick.ac.uk}
\begin{document}

\begin{abstract}
Without higher moment assumptions,
this note establishes the decay of the Kolmogorov distance in a central limit theorem for L\'evy processes. This theorem can be viewed as a continuous-time extension of the classical random walk result by Friedman, Katz and Koopmans~\cite{FriedmanKatzCLT}. 
\end{abstract}

\subjclass[2020]{60F05; 60G51}

\keywords{L\'evy process, Central Limit Theorem, Kolmogorov distance}

\maketitle

\section{Introduction}
\label{sec:intro}
The classical central limit theorem (CLT), applied to a one-dimensional L\'evy processes $X=(X_t)_{t\ge0}$ with zero mean and finite variance, states that $\p(X_t/\sqrt{t}\le x)\to\Phi(x/\sigma)$ as $t\to\infty$ for all $x\in\R$, where
$\sigma^2$ is the variance of $X_1$ and
$\Phi$ is the distribution function of a standard normal random variable $Z$. Since the law $\Phi$ has a bounded density, this weak convergence is well-known to imply the convergence in the Kolmogorov distance  $\sup_{x\in\R}|\p(X_t/\sqrt{t}\le x)-\Phi(x/\sigma)|\to 0$ as $t\to\infty$, see e.g.~\cite[1.8.31--32, p.~43]{MR1353441}. 
It is natural to inquire about the rate of this convergence without additional assumptions on the L\'evy process. In this paper we answer this question, thus extending to the continuous-time setting the classical random walk result by Friedman, Katz and Koopmans~\cite{FriedmanKatzCLT}.


\begin{theorem}\label{thm:int-1}
Let $X=(X_t)_{t\geq0}$ be a L\'evy process satisfying 
$\sigma :=\E[X_1^2]^{1/2}\in(0,\infty)$,
$\E X_1=0$ and $X_0=0$ a.s. If the L\'evy measure~$\nu$ of $X$ is nontrivial, choose $\kappa\ge 1$ such that  $0<\nu((-\kappa,\kappa))\leq \infty$ and otherwise set $\kappa\coloneqq 1$. 
Defining  
$\sigma_t^2\coloneqq \sigma^2-\int_{\R\setminus(-\kappa\sqrt{t},\kappa\sqrt{t})}x^2 \nu(\D x)$ for $t>0$,
we have 
\begin{equation}
\label{eq:int-1}
\int_1^\infty 
\sup_{x\in\R}\big|
    \p\big(X_t/\sqrt{t}\le x\big)
    -\Phi\big(x/\sigma_t\big)\big|
        \frac{\D t}{t}<\infty.
\end{equation}
\end{theorem}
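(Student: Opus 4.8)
\emph{Overall strategy.} The plan is to strip off the large jumps by a coupling argument and then run a Berry--Esseen estimate, through Esseen's smoothing inequality, on the remaining Lévy process, which has bounded jumps. If $\nu$ is trivial then $X_t/\sqrt t\eqd\sigma Z$ and the integrand in~\eqref{eq:int-1} vanishes, so assume $\nu$ nontrivial and fix $\kappa$ as in the statement. For each fixed $t\ge1$ decompose $X_t=\widetilde Y_t+J_t$, where $J_t$ is the (uncompensated) sum of the jumps of $X$ on $[0,t]$ of modulus $>\kappa\sqrt t$ and $\widetilde Y_t:=X_t-J_t$. Then $\widetilde Y,J$ are independent, $\widetilde Y$ is a Lévy process with jumps bounded by $\kappa\sqrt t$, $\Var(\widetilde Y_t)=t\sigma_t^2$, and $\E\widetilde Y_t=-t c_t$ with $c_t:=\int_{\R\setminus(-\kappa\sqrt t,\kappa\sqrt t)}x\,\nu(\D x)$ (finite since $\int x^2\nu(\D x)<\infty$). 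Conditioning on $J_t$ and using $\p(J_t\ne0)=1-e^{-t\lambda_t}\le t\lambda_t$, where $\lambda_t:=\nu(\R\setminus(-\kappa\sqrt t,\kappa\sqrt t))$, gives exactly as for random walks $\sup_x|\p(X_t\le x\sqrt t)-\p(\widetilde Y_t\le x\sqrt t)|\le 2t\lambda_t$; and by Tonelli $\int_1^\infty 2t\lambda_t\,\D t/t=\tfrac{4}{\kappa^2}\int_\kappa^\infty u\,\nu(\R\setminus(-u,u))\,\D u\le\tfrac{2}{\kappa^2}\int x^2\nu(\D x)<\infty$. It thus suffices to bound $\int_1^\infty\widetilde\Delta(t)\,\D t/t$ for $\widetilde\Delta(t):=\sup_x|\p(\widetilde Y_t\le x\sqrt t)-\Phi(x/\sigma_t)|$.

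\emph{Esseen's inequality and decay of the characteristic function.} Since $\sigma_t^2=\sigma_G^2+\int_{|x|\le\kappa\sqrt t}x^2\nu(\D x)$ (with $\sigma_G^2$ the Gaussian part), one has $c_0\le\sigma_t^2\le\sigma^2$ uniformly in $t\ge1$ with $c_0:=\sigma_G^2+\int_{|x|<\kappa}x^2\nu(\D x)>0$ (this is where the hypothesis on $\kappa$ enters), so $\Phi(\cdot/\sigma_t)$ has density $\le(2\pi c_0)^{-1/2}$. Writing $\widehat F_t(\theta):=\E[e^{i\theta\widetilde Y_t/\sqrt t}]$ and $\widehat G_t(\theta):=e^{-\sigma_t^2\theta^2/2}$, Esseen's smoothing inequality gives, for any $T>0$, $\widetilde\Delta(t)\le\tfrac1\pi\int_{-T}^T|\widehat F_t(\theta)-\widehat G_t(\theta)|\,|\theta|^{-1}\,\D\theta+C_0/T$ with $C_0$ absolute. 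We take $T=T_t:=\epsilon_0\sqrt t$ for a small fixed $\epsilon_0\in(0,\pi/\kappa)$, so that $\int_1^\infty(C_0/T_t)\,\D t/t<\infty$. A key auxiliary fact is that $|\widehat F_t(\theta)|\le e^{-c_\star\theta^2}$ for all $|\theta|\le\epsilon_0\sqrt t$ and $t\ge1$, with $c_\star:=\tfrac12\sigma_G^2+\tfrac{2}{\pi^2}\int_{|x|<\kappa}x^2\nu(\D x)>0$: indeed $\log|\widehat F_t(\theta)|=-\tfrac12\sigma_G^2\theta^2-t\int_{|x|\le\kappa\sqrt t}(1-\cos(\theta x/\sqrt t))\,\nu(\D x)$, and restricting the integral to $|x|\le\min(\kappa\sqrt t,\pi\sqrt t/|\theta|)$, which is $\ge\kappa$ on the stated range, and using $1-\cos y\ge2y^2/\pi^2$ for $|y|\le\pi$ yields the bound.

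\emph{The characteristic-function difference and the Tonelli computations.} A direct computation gives $\widehat F_t(\theta)=\widehat G_t(\theta)\exp(-i\theta\sqrt t\,c_t+D_t(\theta))$, where $D_t(\theta):=t\int_{|x|\le\kappa\sqrt t}\bigl(e^{i\theta x/\sqrt t}-1-i\theta x/\sqrt t+\tfrac{\theta^2x^2}{2t}\bigr)\,\nu(\D x)$; crucially, as $\widetilde Y$ has no jumps exceeding $\kappa\sqrt t$, the only departure from a textbook CLT is the drift term $-i\theta\sqrt t\,c_t$. From $|e^{iy}-1-iy+y^2/2|\le|y|^3/6$ one gets $|D_t(\theta)|\le\tfrac{|\theta|^3}{6\sqrt t}\,G(\kappa\sqrt t)$ with $G(R):=\int_{|x|\le R}|x|^3\nu(\D x)$; also $\operatorname{Re}D_t\ge0$ and $|\widehat G_t|\,e^{\operatorname{Re}D_t}=|\widehat F_t|\le e^{-c_\star\theta^2}$. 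Splitting $e^{-i\theta\sqrt t c_t+D_t}-1=(e^{-i\theta\sqrt t c_t}-1)+e^{-i\theta\sqrt t c_t}(e^{D_t}-1)$ and bounding the summands by $|\theta|\sqrt t\,|c_t|$ and $|D_t|e^{\operatorname{Re}D_t}$ gives $|\widehat F_t(\theta)-\widehat G_t(\theta)|\le|\theta|\sqrt t\,|c_t|\,e^{-c_0\theta^2/2}+\tfrac{|\theta|^3}{6\sqrt t}\,G(\kappa\sqrt t)\,e^{-c_\star\theta^2}$. Dividing by $|\theta|$, integrating over $\R$, and inserting into Esseen's inequality yields $\widetilde\Delta(t)\le C_1\sqrt t\,|c_t|+C_2\,G(\kappa\sqrt t)/\sqrt t+C_0/T_t$. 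Two applications of Tonelli now close the argument: $\int_1^\infty\sqrt t\,|c_t|\,\D t/t\le\tfrac2\kappa\int_\kappa^\infty\!\int_{|x|>u}|x|\,\nu(\D x)\,\D u=\tfrac2\kappa\int_{|x|>\kappa}|x|(|x|-\kappa)\,\nu(\D x)\le\tfrac2\kappa\int x^2\nu(\D x)$, and $\int_1^\infty G(\kappa\sqrt t)\,t^{-3/2}\,\D t=2\kappa\int_\kappa^\infty G(R)R^{-2}\,\D R=2\kappa\bigl(\kappa^{-1}\!\int_{|x|\le\kappa}|x|^3\nu(\D x)+\int_{|x|>\kappa}x^2\nu(\D x)\bigr)\le2\kappa\int x^2\nu(\D x)$, both finite.

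\emph{The main difficulty.} The crux is the order of operations in the first step: one must peel off the large jumps \emph{before} passing to characteristic functions. Running Esseen's estimate directly on $X_t$ forces one to control $t\int_{|x|>\kappa\sqrt t}(e^{i\theta x/\sqrt t}-1-i\theta x/\sqrt t)\,\nu(\D x)$, whose compensator $-i\theta\sqrt t\,c_t$ is harmless but whose remaining oscillatory part, estimated by $\tfrac12\theta^2\int_{|x|>\kappa\sqrt t}x^2\nu(\D x)$, produces after integration the bound $\int x^2\log|x|\,\nu(\D x)$, which is not implied by $\E X_1^2<\infty$. A secondary technical point is that the two Tonelli computations above must be carried out with the exact quantities $c_t$ and $G(\kappa\sqrt t)$: replacing them by the natural cruder bounds $|c_t|\le(\kappa\sqrt t)^{-1}\!\int_{|x|>\kappa\sqrt t}x^2\nu(\D x)$ and $G(R)\le\int_0^R\!\int_{|x|>s}x^2\nu(\D x)\,\D s$ reintroduces exactly this spurious logarithm. (An alternative route is to invoke~\cite{FriedmanKatzCLT} for the skeleton $(X_n)_{n\ge1}$ and interpolate to real times, but matching its truncated variance to $\sigma_n$ and passing from integer to real $t$ runs into the same logarithmic issue.)
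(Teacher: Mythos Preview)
Your proof is correct and follows the same overall architecture as the paper: strip off jumps exceeding $\kappa\sqrt t$, bound the coupling error by $t\lambda_t$ (your factor $2$ is harmless but unnecessary), obtain a Berry--Esseen bound of order $t^{-1/2}\int_{|x|\le\kappa\sqrt t}|x|^3\nu(\D x)$ for the truncated process, handle the drift correction $c_t$, and close with the same Tonelli computations.

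The one genuine point of difference is how the Berry--Esseen bound itself is produced. The paper writes $Y^{(t)}_t=\sum_{k=1}^n Z_k$ as an i.i.d.\ sum, applies the classical Berry--Esseen inequality, and passes to the limit $n\to\infty$ using the small-time moment asymptotic $n\E|Y^{(t)}_{t/n}|^3\to t\int_{|x|<\kappa\sqrt t}|x|^3\nu(\D x)$ from~\cite{SmalltimemomentasympLopez}; the drift is then handled separately as a Kolmogorov distance between two shifted normals. You instead run Esseen's smoothing inequality directly on the characteristic function of $\widetilde Y_t/\sqrt t$, extract the drift and the cubic remainder $D_t$ explicitly from the L\'evy--Khintchine exponent, and control $|\widehat F_t|$ via the elementary bound $1-\cos y\ge 2y^2/\pi^2$. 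Your route is more self-contained (no external small-time result needed) at the price of slightly more computation; the paper's route is shorter but imports a nontrivial limit theorem. Both land on the identical integrand and the identical Tonelli reductions, so the two arguments are variants of the same proof rather than distinct strategies.
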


Heuristically,~\eqref{eq:int-1} states that the Kolmogorov distance between the laws of $X_t/\sqrt{t}$ and $\sigma_t Z$ decays faster than say $1/\log(t)$ as $t\to\infty$. The parameter $\kappa\ge 1$ is chosen to ensure $\sigma_t>0$, with its precise value not being important for~\eqref{eq:int-1}. We stress that $\sigma_t$ in~\eqref{eq:int-1} \emph{cannot}, in general, be replaced by $\sigma$. The next result characterises (in terms of the L\'evy measure $\nu$ of $X$) whether such a substitution is valid. 

\begin{theorem}
\label{thm:int-2}
With $X$ as in Theorem~\ref{thm:int-1}, 
the following are equivalent:
\begin{equation}
\label{eq:int-2}
\int_1^\infty 
\sup_{x\in\R}\big|
    \p\big(X_t/\sqrt{t}\le x\big)
    -\Phi\big(x/\sigma\big)\big|\frac{\D t}{t}<\infty
\end{equation}
and 
$\E[X_1^2\max\{0,\log(X_1)\}]<\infty$.
\end{theorem}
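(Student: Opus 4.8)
The plan is to deduce Theorem~\ref{thm:int-2} from Theorem~\ref{thm:int-1} by controlling the difference between $\Phi(x/\sigma_t)$ and $\Phi(x/\sigma)$. Write
\[
\delta_t \coloneqq \sigma^2 - \sigma_t^2 = \int_{\R\setminus(-\kappa\sqrt{t},\kappa\sqrt{t})} x^2\,\nu(\D x),
\]
which is nonnegative, nonincreasing in $t$, and tends to $0$ as $t\to\infty$ since $\int_\R x^2\,\nu(\D x)<\infty$. The elementary estimate $\sup_{x\in\R}|\Phi(x/\sigma_t)-\Phi(x/\sigma)| \le C|\sigma-\sigma_t|/\sigma \le C'\delta_t$ (using $\sigma_t\to\sigma$ so that $\sigma_t$ is bounded below for large $t$, and that the standard normal density is bounded, with $\sup_x |\Phi(x/a)-\Phi(x/b)| = \sup_x \Phi'(\xi)\,|x|\,|1/a-1/b|$ maximised at a universal constant) reduces everything to the single scalar question of whether $\int_1^\infty \delta_t\,\frac{\D t}{t}<\infty$. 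Indeed, by the triangle inequality, \eqref{eq:int-2} holds if and only if \eqref{eq:int-1} holds \emph{and} $\int_1^\infty \delta_t\,\frac{\D t}{t}<\infty$; since \eqref{eq:int-1} always holds by Theorem~\ref{thm:int-1}, \eqref{eq:int-2} is equivalent to $\int_1^\infty \delta_t\,\frac{\D t}{t}<\infty$. One subtlety to check carefully here: the triangle-inequality argument for the "only if" direction needs a lower bound on the Kolmogorov distance in terms of $\delta_t$, i.e. that $\Phi(\cdot/\sigma_t)$ and $\Phi(\cdot/\sigma)$ cannot be closer than order $\delta_t$, which again follows from the explicit form of the maximal difference of two centered Gaussians with different variances.

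It then remains to show the purely analytic equivalence
\[
\int_1^\infty \delta_t\,\frac{\D t}{t}<\infty
\quad\Longleftrightarrow\quad
\E\big[X_1^2\max\{0,\log|X_1|\}\big]<\infty .
\]
(Note $\max\{0,\log(X_1)\}$ in the statement should be read with $|X_1|$, or equivalently, since only large $|X_1|$ matters, the sign is irrelevant.) The plan is a Fubini/Tonelli computation: substituting $u=\sqrt t$ gives $\int_1^\infty \delta_t\,\frac{\D t}{t} = 2\int_1^\infty \Big(\int_{|x|\ge \kappa u} x^2\,\nu(\D x)\Big)\frac{\D u}{u}$, and swapping the order of integration yields $2\int_{|x|\ge\kappa} x^2 \Big(\int_1^{|x|/\kappa}\frac{\D u}{u}\Big)\nu(\D x) = 2\int_{|x|\ge\kappa} x^2\big(\log|x|-\log\kappa\big)\nu(\D x)$, which is finite if and only if $\int_{|x|\ge e} x^2\log|x|\,\nu(\D x)<\infty$ (the contribution of $|x|$ in any bounded range being finite because $\int x^2\,\nu(\D x)<\infty$).

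Finally, this tail integral condition on $\nu$ must be translated into the moment condition on $X_1$. The key input is the standard two-sided comparison between moments of a Lévy process at time $1$ and the truncated moments of its Lévy measure: for any submultiplicative-type weight, $\E[g(|X_1|)]<\infty \iff \int_{|x|\ge 1} g(|x|)\,\nu(\D x)<\infty$, valid here for $g(x)=x^2\log x$; this is a classical fact (see e.g. Sato's book on infinite divisibility, the criterion for finiteness of $\E[g(|X_1|)]$ in terms of $\int_{|x|>1} g(|x|)\,\nu(\D x)$ for $g$ increasing and submultiplicative, and $x\mapsto x^2\log(e+x)$ is submultiplicative). Combining this with the previous step gives the stated equivalence. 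I expect the main obstacle to be the careful bookkeeping in the first paragraph—specifically, establishing the \emph{lower} bound on $\sup_x|\Phi(x/\sigma_t)-\Phi(x/\sigma)|$ in terms of $\delta_t$ so that the "only if" implication is genuinely reversible—rather than the Fubini computation or the citation of the moment/Lévy-measure dictionary, both of which are routine.
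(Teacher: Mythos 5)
Your proposal is correct and follows essentially the same route as the paper: reduce via Theorem~\ref{thm:int-1} and the triangle inequality to a two-sided comparison of $\sup_{x\in\R}|\Phi(x/\sigma_t)-\Phi(x/\sigma)|$ with $\sigma-\sigma_t$ (equivalently your $\delta_t=\sigma^2-\sigma_t^2$, since $\sigma+\sigma_t$ is bounded above and below for $t\ge1$), then a Fubini computation giving $\int_{\R\setminus(-\kappa,\kappa)}x^2\log(|x|/\kappa)\,\nu(\D x)$, and finally the standard submultiplicative $g$-moment dictionary between $\E[g(|X_1|)]$ and $\int_{|x|>1}g(|x|)\,\nu(\D x)$ (which the paper leaves implicit). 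The lower bound you flag as the main remaining check is exactly what the paper establishes by maximising $x\mapsto|\Phi(x)-\Phi(ax)|$ explicitly and applying the mean value theorem to show $\varphi(a)/(1-a)\to1/\sqrt{2e\pi}$ as $a\uparrow1$, so your plan closes in the intended way.
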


Differently put, Theorem~\ref{thm:int-2} states that the Kolmogorov distance between the laws of $X_t/\sqrt{t}$ and $\sigma Z$
satisfies the integrability condition in~\eqref{eq:int-2}
if and only if 
the
integral    
$\int_{\R\setminus(-1,1)} x^2\log(x)\nu(\D x)$
is finite.
 In particular, Theorem~\ref{thm:int-2} characterises explicitly the L\'evy processes for which~\eqref{eq:int-2} does not hold but~\eqref{eq:int-1} does. The main idea behind the proofs of Theorems~\ref{thm:int-1} and~\ref{thm:int-2} is to 
apply Berry-Esseen bounds to a L\'evy process possessing all moments, obtained by removing from a path of  
$X$ the finitely many jumps with magnitude greater than $\kappa\sqrt{t}$ during the time interval 
$[0,t]$.

\section{Proofs}
\label{sec:proofs}

Let $(\Sigma^2,\nu,\beta)$ be the generating triplet of the L\'evy process $X=(X_t)_{t\ge 0}$ corresponding to the cutoff function $x\mapsto\1_{\{|x|<1\}}$,
where $\Sigma^2\geq0$ is the variance of the Gaussian component of $X$ and $\beta$ is a parameter
in $\R$
(see~\cite[Def.~8.2]{SatoBookLevy} for details). All generating triplets in this paper are with respect to the cutoff function $x\mapsto\1_{\{|x|<1\}}$. We refer to  monograph~\cite{SatoBookLevy} for background on L\'evy processes.

\begin{proof}[Proof of Theorem~\ref{thm:int-1}]
For any $t\ge 1$, let
$\tilde Y^{(t)}=(\tilde Y^{(t)}_s)_{s\geq0}$
be the compound Poisson process consisting of jumps of $X$
with magnitude at least $\kappa \sqrt{t}$
and define 
$Y^{(t)}=(Y^{(t)}_s)_{s\geq0}$ 
as 
$Y^{(t)}_s:=X_s-\tilde Y^{(t)}_s$.
Then, by~\cite[Thm~19.2]{SatoBookLevy},
$Y^{(t)}$ is
a L\'evy process with generating triplet  $(\Sigma^2,\nu|_{(-\kappa\sqrt{t},\kappa\sqrt{t})},\beta)$
 whose jumps are thus of magnitude smaller than $\kappa \sqrt{t}$.
 Since the support of the L\'evy measure of $Y^{(t)}$ is compact, by~\cite[Thm~25.3]{SatoBookLevy}, $Y^{(t)}_t$ has moments of all orders. Thus we may define the real number $\mu_t\coloneqq \E Y_t^{(t)}$. Moreover, the constant $\kappa\geq1$, chosen in the statement of Theorem~\ref{thm:int-1}, ensures $$0<\sigma^2_t=\Sigma^2+\int_{(-\kappa\sqrt{t},\kappa\sqrt{t})}x^2\nu(\D x)=\Var(Y_t^{(t)})/t<\infty\quad\text{ for all $t\geq1$.}$$
 The first equality in the last display follows from the identity
 $\sigma^2=\Sigma^2+\int_{\R}x^2\nu(\D x)$, which holds by~\cite[Example~25.12]{SatoBookLevy} applied to $X$. The same argument applied to the L\'evy process $Y^{(t)}$ yields the second equality in the display. 

 Define the function 
$$K(t)\coloneqq\sup_{x\in\R}
    |\p(X_t/\sqrt{t}\le x)
        -\Phi(x/\sigma_t)|\quad\text{ for all $t>0$.}$$
Let $J_{t}$ denote the event on which $X$ only has jumps of magnitude smaller than $\kappa\sqrt{t}$ during the time interval $[0,t]$. Note that on the event $J_t$ we have $X_t=Y_t^{(t)}$, implying the inequality
$$|\p(X_t\le x)-\p(Y_t^{(t)}\le x)|
\le\E |\1_{\{X_t\leq x\}}-\1_{\{Y_t^{(t)}\leq x\}}| 
\le\E[\1_{J_t^\co}]=\p(J_t^\co)
\quad\text{for all $x\in\R$ and $t\ge 1$.}$$ 
By adding and subtracting the probability $\p(Y_t^{(t)}/\sqrt{t}\le x)$
in the definition of $K(t)$, for all $t\geq1$ we obtain the inequality 
\begin{equation}
\label{eq:A_less_than_B_plus}
    K(t)\le A(t)+\p(J_{t}^\co),\quad \text{
where}\quad  
A(t)\coloneqq\sup_{x\in\R}\big|
    \p\big(Y^{(t)}_t/\sqrt{t}\le x\big)
    -\Phi\big(x/\sigma_t\big)\big|.
    \end{equation} 
The triangle inequality implies 
\[
A(t)
=\sup_{x\in\R}\big|
    \p\big((Y^{(t)}_t-\mu_t)/\sqrt{t}\le x\big)
    -\Phi\big((x+\mu_t/\sqrt{t})/\sigma_t\big)\big|
\leq B(t)+C(t),
\]
where for any $t\geq1$ we define 
\begin{equation*}
B(t)\coloneqq\sup_{x\in\R} \big|\p\big(
    (Y^{(t)}_t-\mu_t)/\sqrt{t}\le x\big)
        -\Phi\big(x/\sigma_t\big)\big|
\quad\&\quad
C(t)\coloneqq \sup_{x\in\R}
    \big|\Phi\big(x/\sigma_t\big)
        -\Phi\big((x+\mu_t/\sqrt{t})/\sigma_t\big)\big|.
\end{equation*}
To complete the proof, it suffices to show that the following integrals are finite:
\begin{equation*}
\text{\nf (a)}\enskip
\int_1^\infty \p(J_t^\co) \frac{\D t}{t}<\infty,
\qquad\text{\nf (b)}\enskip
\int_1^\infty B(t) \frac{\D t}{t}<\infty,
\qquad\text{\nf (c)}\enskip
\int_1^\infty C(t) \frac{\D t}{t}<\infty.
\end{equation*}

The integrals in (a)--(c) exist since the integrands are non-negative. It remains to prove they are finite. Fubini's theorem yields  
\begin{equation}
\label{eq:I_expression}
I\coloneqq \int_\R x^2\nu(\D x)
=\int_0^\infty 2x\ov\nu(x)\D x
=\int_0^\infty \ov\nu(\sqrt{x})\D x<\infty, \end{equation}
where $\nu$ is the L\'evy measure of $X$ and $\ov\nu(x)\coloneqq \nu(\R\setminus(-x,x))$, $x>0$. 

(a) Since $\tilde Y^{(t)}=X-Y^{(t)}$ is a compound Poisson process with intensity $\ov\nu(\kappa\sqrt{t})$, 
the first jump of $\tilde Y^{(t)}$ is exponentially distributed with mean $1/\ov\nu(\kappa\sqrt{t})$.
As the event $J_t$ can be defined by the first jump of $\tilde Y^{(t)}$ being greater than $t$,  it has probability $\p(J_t)=e^{-t\ov\nu(\kappa\sqrt{t})}$. Thus, we have $$\p(J_{t}^\co)=1-e^{-t\ov\nu(\kappa\sqrt{t})}\le t\ov\nu(\kappa\sqrt{t}), \quad\text{for $t>0$},$$
implying the bound 
$\int_1^\infty t^{-1}\p(J_t^\co) \D t\le \int_1^\infty \ov\nu(\kappa\sqrt{t})\D t\leq I/\kappa^2$.

(b) For any $t\ge1$, $Y^{(t)}_t$ is nontrivial and infinitely divisible with a finite third moment. More precisely, $Y^{(t)}_t =\sum_{k=1}^n Z_k$, where the variables
$Z_k\coloneqq Y^{(t)}_{tk/n}-Y^{(t)}_{t(k-1)/n}\eqd Y^{(t)}_{t/n}$ are independent. 
The Berry-Esseen inequality for independent random variables yields a constant $c>0$ such that 
\begin{align*}
B(t)
&=\sup_{y\in\R}\bigg|\p\bigg( 
    \frac{\sum_{k=1}^n Z_k-\E\sum_{k=1}^n Z_k}
        {\Var{\big( \sum_{k=1}^n Z_k\big)}^{1/2}}\le y\bigg)
    - \Phi(y)\bigg|
\le 
cn \E \big[|Y^{(t)}_{t/n}
    -\E Y^{(t)}_{t/n}|^3\big]/
        \big( n \Var(Y^{(t)}_{t/n})\big)^{3/2}\\
 &\le 4cn \big(\E \big[\big|Y^{(t)}_{t/n}\big|^3\big] 
    + \big|\E \big[Y^{(t)}_{t/n}\big]\big|^3\big)/\big(n\Var\big(Y^{(t)}_{t/n}\big)\big)^{3/2}\qquad
    \text{for all $n\in\N$.}
\end{align*}
The second inequality in the display above follows from the inequality $|(a+b)/2|^p\le (|a|^p+|b|^p)/2$ for any $a,b\in\R$ and $p\ge 1$ (which holds by convexity), applied with $a=Y_{t/n}^{(t)}$, $b=-\E Y_{t/n}^{(t)}$ and $p=3$. 
Since, by~\cite[Thm~1.1]{SmalltimemomentasympLopez}, the limit $\lim_{n\to\infty}n
    \E\big[\big|Y^{(t)}_{t/n}\big|^3\big]
=t\int_{(-\kappa\sqrt{t},\kappa\sqrt{t})}|x|^3\nu(\D x)$ holds, the equalities $\E\big[Y^{(t)}_{t/n}\big]=\E\big[Y^{(t)}_{1}\big]t/n$ and $\Var\big(Y^{(t)}_{t/n}\big)=\Var\big(Y^{(t)}_1\big)t/n$ imply
\begin{align*}
B(t)
&\le \lim_{n\to\infty}
\frac{4cn(\E\big[|Y^{(t)}_{t/n}|^3\big]+\big|\E\big[Y_{t/n}^{(t)}\big]\big|^3)}
    {\big(n\Var\big(Y^{(t)}_{t/n}\big)\big)^{3/2}}
=\frac{4c(\lim_{n\to\infty}n\E\big[|Y^{(t)}_{t/n}|^3\big] 
+\lim_{n\to\infty}\big|\E\big[Y_1^{(t)}\big]\big|^3t^3/n^2)}
    {\big(\Var\big(Y^{(t)}_1\big)t\big)^{3/2}}\\
&=\frac{4ct\int_{(-\kappa\sqrt{t},\kappa\sqrt{t})} 
    |x|^3 \nu(\D x)}
    {\big( \Sigma^2 t 
    + t\int_{(-\kappa\sqrt{t},\kappa\sqrt{t})} x^2\nu(\D x)\big)^{3/2}}\le 
    \frac{4c }{\sqrt{t}\sigma_1^{3}}
    \int_{(-\kappa\sqrt{t},\kappa\sqrt{t})}|x|^3\nu(\D x)
\end{align*}
for any $t\geq1$ (recall that $t\mapsto \sigma_t^2$, defined in Theorem~\ref{thm:int-1}, is non-decreasing).
We thus obtain
\begin{equation}
\label{eq:bound_on_B(t)_integral}
\int_1^\infty B(t)\frac{\D t}{t}\le
  \frac{4c}{\sigma_1^3} \int_1^\infty 
t^{-3/2}\int_{(-\kappa\sqrt{t},\kappa\sqrt{t})}|x|^3\nu(\D x)\D t\leq \frac{12c}{\sigma_1^3}\int_1^\infty 
t^{-3/2}\int_0^{\kappa \sqrt{t}} x^2\ov \nu(x)\D x\D t,
\end{equation}
where the second inequality follows from
the identity $\int_{(-w,w)}|x|^3\nu(\D x)
=-w^3\ov\nu(w)+3\int_0^w x^2\ov\nu(x)\D x$ for all $w>0$. 
The limit $0\leq y^2\ov\nu(y)
\leq \int_{\R\setminus(-y,y)}x^2\nu(\D x)\to0$ as $y\to\infty$ 
implies 
$\int_0^{\kappa\sqrt{T}}x^2\ov\nu(x)\D x/\sqrt{T}\to0$
as $T\to\infty$.
Thus, the bound in~\eqref{eq:bound_on_B(t)_integral}
and integration-by-parts
imply that the integral in (b) is finite:
\begin{align*}
\int_1^\infty t^{-3/2}
    \int_0^{\kappa\sqrt{t}} x^2\ov\nu(x)\D x\D t
&=  \bigg[-2t^{-1/2}\int_0^{\kappa\sqrt{t}} x^2\ov\nu(x)\D x\bigg]_{1}^\infty
    + \int_1^\infty 2t^{-1/2}\cdot \kappa^2t\ov\nu(\kappa\sqrt{t})\cdot \frac{\kappa}{2\sqrt{t}}\D t\\
&=  2\int_0^\kappa x^2\ov\nu(x)\D x
    + \kappa^3\int_1^\infty\ov\nu(\kappa\sqrt{t})\D t
= 2\int_0^\kappa x^2\ov \nu(x)\D x + \kappa \int_{\kappa^2}^\infty \ov \nu(\sqrt{y}) \D y \\
&
\le 2\int_0^\kappa x^2\ov \nu(x)\D x + \kappa I,
\end{align*}
where the final inequality follows from~\eqref{eq:I_expression}.

(c) Since the distribution $\Phi$ is unimodal and symmetric, the mean-value theorem implies that $C(t)$ satisfies 
\begin{align*}
C(t)
    =\big|\Phi\big(
            \mu_t/(2\sqrt{t}\sigma_t)\big)
        -\Phi(-\mu_t/(2\sqrt{t}\sigma_t)\big|
=e^{-c^2/2}|\mu_t|/(\sqrt{2\pi t}\sigma_t) \leq
|\mu_t|/(\sigma_1\sqrt{t})\quad \text{for $t\geq1$} 
\end{align*} 
and some
$c\in(-|\mu_t|/(2\sqrt{t}\sigma_t),|\mu_t|/(2\sqrt{t}\sigma_t))$ (recall that $t\mapsto \sigma_t^2$ is non-decreasing). Since $\sigma_1>0$, it suffices to prove that $\int_1^\infty |\mu_t|t^{-3/2}\D t<\infty$.
By $0=\E[X_t]=\beta t+t\int_{\R\setminus(-1,1)}x\nu(\D x)$, we have  $\mu_t=\E[Y_t^{(t)}]=-t\int_{\R\setminus(-\kappa\sqrt{t},\kappa\sqrt{t})}x\nu(\D x)$. Hence $|\mu_{t}|\le t\int_{\R\setminus(-\sqrt{t},\sqrt{t})}|x|\nu(\D x)$ for all $t\ge 1$ since  $\kappa\geq 1$.  Apply Fubini’s theorem to obtain  
\begin{equation*}
\int_1^\infty \frac{|\mu_t|}{t^{3/2}}\D t
\le \int_1^\infty \frac{1}{\sqrt{t}} \int_{\R\setminus(-\sqrt{t},\sqrt{t})}|x|\nu(\D x)\D t
\leq 2\int_{\R\setminus(-1,1)}x^2
\nu(\D x)
<\infty.
\end{equation*}
This implies that the integral in (c) is finite, completing the proof of Theorem~\ref{thm:int-1}.
\end{proof}

\begin{proof}[Proof of Theorem~\ref{thm:int-2}]
Define the function $\varphi(a)
\coloneqq\sup_{x\in\R}|\Phi(x)-\Phi(ax)|$, for $a\in(0,1)$. 
Note that the function $x\mapsto|\Phi(x)-\Phi(ax)|$ is symmetric around $0$ and its maximal value on the positive half-line is attained when its derivative is $0$. Elementary calculations reveal that this critical point on the positive half-line equals $z(a):=\sqrt{2\log(1/a)/(1-a^2)}$, implying $\varphi(a)=\Phi(z(a))-\Phi(az(a))$. 

Let $K(t,x)\coloneqq|\p(X_t/\sqrt{t}\le x)
-\Phi(x/\sigma_t)|$ for $t>0$ and $x\in\R$ and recall  $K(t)=\sup_{x\in\R}K(t,x)$. Since $\varphi(\sigma_t/\sigma)=\sup_{x\in\R}|\Phi(x/\sigma_t)-\Phi(x/\sigma)|$, the triangle inequality yields  
\[
K(t) + \varphi(\sigma_t/\sigma)
\ge\sup_{x\in\R}\left|\p\left(\frac{X_t}{\sqrt{t}}\le x\right)
    -\Phi\left(\frac{x}{\sigma}\right)\right|
\ge\varphi(\sigma_t/\sigma)-K(t)\qquad\text{for all $t\geq1$.}
\]
By Theorem~\ref{thm:int-1} we have $\int_1^\infty t^{-1}K(t)\D t<\infty$, so the integral in~\eqref{eq:int-2} is finite if and only if 
\begin{equation}
\label{eq:int-3}
\int_1^\infty \varphi(\sigma_t/\sigma)\frac{\D t}{t}<\infty.
\end{equation}
Thus, it suffices to show that~\eqref{eq:int-3} is equivalent to $\int_{\R\setminus(-1,1)} x^2\log(x)\nu(\D x)<\infty$.

Note that, as $a\uparrow1$, we have 
$\log(1/a)/(1-a)\to 1$
and $z(a)\to 1$. The mean value theorem implies 
\[
\frac{\varphi(a)}{1-a}
=\frac{\Phi(z(a))-\Phi(az(a))}{1-a}
=\frac{1}{1-a}\int_{az(a)}^{z(a)}
    \frac{1}{\sqrt{2\pi}}e^{-x^2/2}\D x
=\frac{z(a)}{\sqrt{2\pi}}
    e^{-c(a)^2/2}
\to\frac{1}{\sqrt{2e\pi}}
\]
for some $c(a)\in(az(a),z(a))$. Since $\sigma_t/\sigma\uparrow 1$ as $t\to\infty$, for all sufficiently large times $t$ we have 
$(\sigma-\sigma_t)/\sqrt{4e\pi\sigma^2} \leq \varphi(\sigma_t/\sigma) \leq (\sigma-\sigma_t)/\sqrt{e\pi\sigma^2}$, implying that~\eqref{eq:int-3} is equivalent to $\int_1^\infty t^{-1}(\sigma - \sigma_t)\D t<\infty$. Moreover, Fubini's theorem gives 
\begin{align*}
\int_1^\infty (\sigma - \sigma_t)\frac{\D t}{t}
&=\int_1^\infty \int_{\R\setminus(-\kappa\sqrt{t},\kappa\sqrt{t})}x^2\nu(\D x)\frac{\D t}{t}\\
&=\int_{\R\setminus(-\kappa,\kappa)}x^2 \int_1^{x^2/\kappa^2}\frac{\D t}{t}\nu(\D x)
=\int_{\R\setminus(-\kappa,\kappa)}
    x^2\log(x^2/\kappa^2)\nu(\D x).
\end{align*}
The last integral is finite if and only if $\int_{\R\setminus(-1,1)} x^2\log(x)\nu(\D x)<\infty$, so the result follows.
\end{proof}

\printbibliography

\section*{Acknowledgements}

\thanks{
\noindent AM was supported by EPSRC grant EP/P003818/1 and the Turing Fellowship funded by the Programme on Data-Centric Engineering of Lloyd's Register Foundation;
JGC and AM are supported by The Alan Turing Institute under the EPSRC grant EP/N510129/1; 
JGC is supported by CoNaCyT scholarship 2018-000009-01EXTF-00624 CVU 699336; DB is funded by CDT in Mathematics and Statistics at The University of Warwick.

\vspace{1mm}

\noindent We would like to thank Nicholas H. Bingham for the encouragement to write up this note and an anonymous Referee and Associate Editor
for comments that improved the presentation in the paper.}

\end{document}